\newtheorem{definition}{Definition}
\newtheorem{theorem}{Theorem}
\title{Jackson network in a random environment: strong approximation}
\author{Elena Bashtova\footnote{Lomonosov Moscow State University; supported by RFBR grant 20-01-00487; e-mail: elena.bashtova@math.msu.ru}, Elena Lenena\footnote{Lomonosov Moscow State University} }
\date{}
\begin{document}

%\begin{frontmatter}

%% Title, authors and addresses
\maketitle{}
%% use the tnoteref command within \title for footnotes;
%% use the tnotetext command for the associated footnote;
%% use the fnref command within \author or \address for footnotes;
%% use the fntext command for the associated footnote;
%% use the corref command within \author for corresponding author footnotes;
%% use the cortext command for the associated footnote;
%% use the ead command for the email address,
%% and the form \ead[url] for the home page:
%%
%% \title{Title\tnoteref{label1}}
%% \tnotetext[label1]{}
%% \author{Name\corref{cor1}\fnref{label2}}
%% \ead{email address}
%% \ead[url]{home page}
%% \fntext[label2]{}
%% \cortext[cor1]{}
%% \address{Address\fnref{label3}}
%% \fntext[label3]{}

%% use optional labels to link authors explicitly to addresses:
%% \author[label1,label2]{<author name>}
%% \address[label1]{<address>}
%% \address[label2]{<address>}

%\author[label1]{Elena Lenena\corref{cor1}}
%\cortext[cor1]{Corresponding author}
%\ead{elenalenena@gmail.com}
%\author[label1]{Peter Ivanov}
%\ead{p\_ivanov@email.mail}
%\address[label1]{Lomonosov Moscow State University, Moscow, Russian Federation}

\begin{abstract}

%% Text of abstract
We consider a Jackson network with regenerative input flows in which every server is subject to a random environment influence generating breakdowns and repairs. They occur in accordance with two  independent sequences of i.i.d. random variables. We establish a theorem on the strong approximation of the vector of queue lengths by a reflected Brownian motion in positive orthant.
\end{abstract}

%\begin{keyword}
Keywords: Jackson network, Strong approximation, Heavy traffic, Unreliable systems
%% keywords here, in the form: keyword \sep keyword

%% MSC codes here, in the form: \MSC code \sep code
%% or \MSC[2008] code \sep code (2000 is the default)

%\end{keyword}

%\end{frontmatter}
\pagestyle{empty}
%%
%% Start line numbering here if you want
%%
%\linenumbers

%% main text
\section{Introduction}
\label{S:1}
%Jackson network story 
Jackson networks are one of the most fundamental objects in the theory of stochastic processing networks. 
Such network models have long been used for a wide range of applications in transportation, production, computer science, social networks etc., so results concerning their asymptotic behavior present a practical interest.

There are multiple research directions: one is to evaluate the limit distribution and its product forms in the case when this limit distribution exists. Another research direction considers systems with heavy traffic.       
%Изучение пределельного распределения и нахождения продукт формы, когда распр существует. Этому посвящены [x], предельное распределение существует в условиях стабильности системы (без растущих очередей)ю Другое направление ихучения -- ситуация высокой загрузки узлов. 
Heavy traffic and overloaded system cases are analytically non-trivial and at the same time the most crucial, as overloading may lead to breakdowns, production shutdown, losses from downtime, repair costs and other overheads. 

%Диффузионная аппроксимация 
%Limit theorems 
Heavy traffic limit theorems of the type considered in this paper have attracted a lot of attention previously.
%Brownian motion [reflected]
 Harrison \cite{harrison} considered tandem queueing systems and proved a heavy traffic limit theorem for the stationary distribution of the sojourn times. His limit was also given as a complicated function of multidimensional Brownian motion. Harrison later again considered tandem queueing systems, and introduced reflected Brownian motion on the non-negative orthant as the diffusion limit. Other work in the areas of heavy traffic limit theorems and diffusion approximations is surveyed in Whitt  \cite{whitt} and Lemoine \cite{lemoine}. Reiman \cite{reiman} presented heavy traffic limit theorems for the queue length and sojourn time processes associated with open queueing networks. These limit theorems state that properly normalized sequences of queue length and sojourn time converge to a reflected Brownian motion in an orthant. Furthermore, strong (almost sure) approximation with infinite time horizons and a reliable server networks were considered by Chen and Yao \cite{chen}.
%Unreliable server
There has been also a growing literature on queues with an unreliable server. To emphasize the significance of queueing models with unreliable servers in applications we will refer to some works \cite{djellab}, \cite{gaver}, \cite{sherman}, \cite{kalimulina}.

The system under consideration in \cite{coupling} is a single queue system with an unreliable server, and it emerged  as a mathematical model of unregulated crossroads. The simplest model of a vehicle crossing problem in probabilistic terms was considered in \cite{harreim}. There is a one-lane road $S_{1}$ which is intersected on one side by a single-lane secondary road $S_{2}$. A car waiting on the secondary road $S_{2}$ will turn right only if there is at least distance $J$ between the intersection and the first car on $S_{1}$. We may consider the crossroads with respect to cars arriving on the secondary road $S_{2}$ as a queueing system with an unreliable server. The server is in working state when there are no cars within a distance $J$ of the crossroads on the road $S_{1}$ and it becomes out of order when the first car appears on this interval. By the nature of the system we have to suppose that the breakdown of the server can occur at any time, even when the server is free. Note that a queueing system with an unreliable server may also be considered in the stochastic analysis of crossroads with traffic lights.

In this paper we present a strong Gaussian approximation for a more functionally complicated system with unreliable servers and generalized input flows. Additionally, we do not require input flows to be independent. 

\begin{definition}
We say that a vector-valued random process $\zeta =\{\zeta_t,\, t\geq 0\}$ admits a $r$-strong approximation by  some process $\overline{\zeta}=\{\overline{\zeta}_t,t\geq 0\}$,
if there exists a probability space $(\Omega,\mathcal{F},
\mathsf{P})$ on which one can define both $\zeta$ and $\overline{\zeta}$ in such a way that
$$\sup\limits_{0\le u\le t}\|\zeta_u-\overline{\zeta}_u\| = o(t^{1/r}),\text{ a.s., when }t\to\infty. $$

\end{definition}

%\cite{}

%\begin{itemize}
%\item Bullet point one
%\end{itemize}
%\begin{enumerate}
%\item Numbered list item one
%\end{enumerate}
%\begin{figure}[h]
%\centering\includegraphics[width=0.4\linewidth]{placeholder}
%\caption{Figure caption}
%\end{figure}

\section{System description}
\label{S:2}

\subsection{The queuing network with unreliable servers}

The queuing network $\mathcal N$ we study has $K$ single server stations, and each of them has an associated infinite capacity waiting room. At least one station has an arrival stream from outside the network, and the vector $A(\cdot)$ of arrival streams is assumed to be a multi-dimensional regenerative flow. Recall

\begin{definition}

A multi-dimensional coordinatewise c\`{a}dl\`{a}g stochastic process $A (t)$ is called regenerative one if there exists an increasing sequence of random variables $\{\theta_{i}, i \geq 0\}$, $\theta_{0} = 0$  such that the sequence
$$\{\kappa_i\}_{i=1}^{\infty} = \{ A(\theta_{i-1}+t)- A(\theta_{i-1}),\theta_{i} - \theta_{i-1}, t \in [0, \theta_{i} - \theta_{i-1})\}_{i=1}^{\infty}$$
consists of independent identically distributed  random elements.
The random variable $\theta_{j}$ is said to be the $j$th regeneration point of  $A,$ and $\tau_{j}= \theta_{j} - \theta_{j-1}$ $($where $\theta_{0} = 0)$ to be the $j$th regeneration period. 
\end{definition}

From Smith \cite{Smith1955} it is known that we can  define the asymptotic intensity vector $\lambda = \lim\limits_{t\to\infty}\frac{A(t)}{t}$ a.s., and the asymptotic covariance matrix $V = \lim\limits_{t\to\infty}\frac{\mathsf{Var}A(t)}{t}$.

Each station's service times $\{\eta_{j}^{i}\}_{i=1}^{\infty}$, $j=1,\ldots, K,$ are mutually independent sequences of i.i.d. random variables with mean $1/\mu_j$ and variance $\sigma^2_j$. After being served at station $k,$ the customer is routed to station $j$ $(j=1,\ldots,K)$ with probability $p_{kj}$.
The routing matrix $P=\|p_{kj}\|_{k,j=1}^K$ 
%- semistochastic, so that $\sum_{j=1}^{K}p_{ij}<1$, so that with %probability  $\alpha=1-\sum_{j=1}^{k}p_{ij}$ customers served at %station k leave the network. j) Additionally, the matrix $P$ 
is assumed to have spectral radius strictly less than unity, i.e. there is always a positive probability than a served customer leaves the system immediately.
For  $i=1,2,\ldots$ and $k=1,\ldots K$, define $\varphi_{k}^{i}$ to be a random variable equal to $j = 1,\dots, K$ whenever $i$th customer served on station $k$ is routed to station $j,$ and $\varphi_{k}^{i} = 0$ if this customer exits the network.
Routing vectors are defined by $\phi_{k}^{i}=e_{\varphi_{k}^{i}}$, where $e_{j}$ is the $K$-vector whose $j$th component is 1 and other components are 0, if $j=1,\ldots,K$, and  $e_0=0$. 
Customers routing happens independently and immediately.

In our model the service on every channel is influenced by  a random environment which causes breakdowns of the server  (falling into OFF state from ON state) at random moments. The repair of the server also takes a random time.
We suppose that consecutive  time intervals of states ON and OFF form two independent sequences of i.i.d. random variables and, for $j$th server, denote them  by $\{u_j^{i}\}_{i=1}^\infty$ and  $\{v_j^{i}\}_{i=1}^\infty$ respectively. Let $a_j = \mathsf{E}u_j^{1} $, $b_j = \mathsf{E}v_j^{1}$, $\alpha_j = {a_j}({b_j+a_j})^{-1}$,  $s^2_j = \mathsf{Var}u_j^{1} $, $d^2_j = \mathsf{Var}v_j^{i}$ for $j=1,\ldots,K$.
 We suppose that the service that was interrupted by the breakdown is continued upon repair from the point at which it was interrupted. 
%We suppose that sequences ${}$

%We suppose that all the servers are in critical state, i.e.   $\lambda_j=\alpha_j\mu_j$ for each $j=1,\ldots,K.$
Let $Q(t) = (Q_1(t),\dots, Q_K(t))$  be the vector of number of customers in each channel at time $t$. 
Next we need  vector-valued busy time processes $B(t) = (B_1(t), \dots, B_K(t))$.  The $j$th component of  $B(t)$, where $j=1,\ldots,K,$ indicates the amount of time up to $t$ the server at station $j$  is busy (i.e. in state ON and serving jobs). %$I(t) = t-B(t)$.
Then,
%$S_{c}^{k}(t)=\hat S_{k}(t)- \alpha_{k}\mu_{k}(P-E)_{k} t$

$$Q(t)= A(t)+\sum\limits_{j=1}^{K} {L_{j}(B_{j}(t))},$$
where 
$$ L_{j}(u) = \sum\limits_{i=1}^{S_{j}(u)} (\phi_{j}^{i}-e_{j}),\quad 1\leq k \leq K. $$

Next, we consider a system of equations 
\begin{equation}
\label{eq:tau}
\gamma_{j}=\lambda_{j}+ \sum_{i=1}^{k} (\gamma_i\wedge\alpha_i\mu_i) p_{ij},\quad j=1,\ldots,K.
\end{equation}

In the Jackson networks theory, the systems like above play a significant role and are known as the traffic equations. Due to Theorem 7.3 in Chen and Yao \cite{chen} our traffic equation (\ref{eq:tau}) has a unique solution $\gamma = (\gamma_{1},\dots, \gamma_{K})$. 
Therefore we may define a $j$th station traffic coefficient $\rho_j= \frac{\gamma_j}{\alpha_j\mu_j}$, $j=1,\ldots,K$. 
Buffer $j$ is called a nonbottleneck if $\rho_j<1$, a bottleneck if $\rho_j\ge1$, a balanced bottleneck if $\rho_j=1$, and a strict bottleneck if $\rho_j>1$.

\subsection{Reflected Brownian motion in orthant}
In order to articulate the key result we will introduce one more definition.
%Our objective in this paper is to construct and characterize a $K$-dimensional stochastic process $Z = \{Z(t); t\geq0)\}$ which has the following properties. First, $Z$ is a Markov process with stationary transition probabilities, continuous sample paths, and state space S. Second, $Z$ behaves on the interior of $S$ like a $K$-dimensional Brownian Motion with covariance matrix $A$ and drift vector b. Third, $Z$ reflects instantaneously at the boundary of S. Finally, the direction, of reflection everywhere on the boundary surface $Z_{i} = 0$ is the $i$th row of the reflection matrix $I-Q$. In assuming that the reflection matrix has ones on the diagonal, we are simply adopting a convenient normalization, since these diagonal elements must be positive. (The direction of reflection at each point on the boundary must have a positive inward normal.) In assuming that $Q$ is nonnegative, we restrict attention to the case where reflection is downward (toward the origin) at every point on the boundary. With $Q$ assumed nonnegative, the requirement that it have spectral radius less than unity is in a sense necessary, as we shall explain later.
Consider a pair of $K$-dimensional processes $Z=\{ Z(t); t\geq 0\}$ and $Y=\{ Y(t); t\geq 0\}$  which jointly satisfy the following conditions: 

%\begin{equation} \label{first} 
$$Z(t)= W(t)+Y(t)(I-P),t\geq 0,$$ 
%\end{equation},
where $W=\{ W(t); t\geq 0\}$ is a $K$-dimensional Brownian motion with covariance matrix $ \Gamma$, drift vector $b$ and $W(0)\in \mathbb{R}^K_+$; 
$Z(t)$ takes values in $\mathbb{R}^K_+$, $t\geq 0$;
$Y_{j}(.)$ is continuous and nondecreasing with 
$Y_{j}(0)=0$; 
and $Y_{j}(.)$ increases only at those times $t$ where 
$Z_{j}(t)=0,$ $j=1,\dots K$.
It was shown in \cite{harreim} that for any given Brownian motion $W$ there exists a unique pair of processes $Y$ and $Z$ satisfying conditions above. 
%Each component of Z is the sum of a continuous martingale and a continuous process of bounded variation. 

In the language of \cite{harreim}  and \cite{chen}, $Z$ is a reflected Brownian motion on $\mathbb{R}^K_+$ with drift $b$, covariance matrix $\Gamma$, and reflection matrix $(I-P)$.

\section{Main result}
\label{S:3}

\begin{theorem}
Let $\mathsf{E}\tau_i^p<\infty$, $\mathsf{E}\|A(\theta_{i+1}) - A(\theta_i)\|^p<\infty$, $\mathsf{E}(\eta^i_j)^p<\infty$ for $i\in\mathbb{N}$ and any $j=1,\ldots,K$. Then $Q$ admits $p'$-strong approximation by  a reflected Brownian motion on $\mathbb{R}^K_+$ with drift $\lambda-\alpha\mu(I-P)$, reflected matrix $(I-P)$, covariance matrix $\Gamma = ||\Gamma_{kl}||_{k,l = 1}^K$,
$$
\Gamma_{kl} =V_{kl} +\sum\limits_{j=1}^K (\gamma_j\wedge\alpha_j\mu_j)p_{jk}(\delta_{kl} - p_{jl})+ (\sigma^2_j\mu^3_j\alpha_j +\mu_j^2D_j)(\rho_j\wedge 1)(p_{jk}-\delta_{jk})(p_{jl} - \delta_{jl}) ,
$$
where
$$
D_j = \frac{a^2_{j} d^2_{j}+b^2_{j} s^2_{j}}{(a_{j}+b_{v^j})^3}
$$
and $p'=p$ for $p<4$ and $p'$ is any number less than $4$ for $p\ge 4$.
\end{theorem}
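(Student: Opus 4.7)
\emph{Proof plan.} I follow the two-step scheme developed by Chen and Yao \cite{chen} for reliable Jackson networks, adapted to accommodate the random environment via the cumulative ON-time processes $T_j(t)$. The strategy is to express $Q$ as the image of a free netput under an oblique Skorokhod reflection, strongly approximate the netput on a common probability space, and then invoke Lipschitz continuity of the reflection map to transfer the approximation to $Q$.

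\textbf{Netput representation.} Introduce the potential-service counts $S_j(u)=\max\{n\geq 0:\eta_j^1+\ldots+\eta_j^n\leq u\}$, the cumulative routing $\Phi_{kj}(n)=\sum_{i=1}^n\mathbf{1}\{\varphi_k^i=j\}$, the potential departures $N_j(t)=S_j(T_j(t))$, and the actual departures $D_j(t)=S_j(B_j(t))$. The missed-services process $J_j(t)=N_j(t)-D_j(t)$ is non-decreasing and grows (to leading order) only when $Q_j(t)=0$. Centring each building block about its asymptotic trend, the flow equation $Q_j=A_j-D_j+\sum_k\Phi_{kj}(D_k)$ rearranges as
$$Q(t)=\widehat X(t)+(I-P^T)\,J(t),$$
where $\widehat X(t)=\bigl(\lambda-\alpha\mu(I-P)\bigr)t+\widetilde A(t)-(I-P^T)\widetilde N(t)+\sum_k\widetilde\Phi_{k\cdot}(D_k(t))$ and tildes denote centred processes. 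Hence $(Q,J)$ is the unique oblique-reflection image of $\widehat X$ with reflection matrix $I-P^T$, equivalent to the paper's $(I-P)$ acting from the right on row vectors.

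\textbf{Building-block approximations.} On a single probability space, under the stated $p$-th moment assumptions, I would invoke: the Horv\'ath-type strong approximation for multidimensional regenerative processes to obtain $\widetilde A(t)=W^A(t)+o(t^{1/p'})$ a.s.\ with $W^A$ of covariance $V$; the classical KMT rate for renewals giving $\widetilde S_j(u)=W^S_j(u)+o(u^{1/p'})$ with variance $\sigma_j^2\mu_j^3$; KMT for the i.i.d.\ routing sums yielding $\widetilde\Phi_{k\cdot}(n)=W^\Phi_k(n)+o(n^{1/p'})$ with multinomial covariance $p_{kj}(\delta_{jl}-p_{kl})$; and a strong approximation for alternating renewal-reward processes giving $\widetilde T_j(t)=W^T_j(t)+o(t^{1/p'})$ with variance rate $D_j$. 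Substituting these into $\widehat X$, the compositions $\widetilde S_j(T_j(t))$ and $\widetilde\Phi_{k\cdot}(D_k(t))$ are controlled via the Brownian modulus of continuity together with the fluid/LIL estimates $T_j(t)=\alpha_j t+O(\sqrt{t\log\log t})$ and $D_k(t)=(\gamma_k\wedge\alpha_k\mu_k)t+O(\sqrt{t\log\log t})$. Careful bookkeeping collapses all contributions into a single $K$-dimensional Brownian motion $W$ of covariance $\Gamma$, the caps $(\gamma_j\wedge\alpha_j\mu_j)$ and $(\rho_j\wedge 1)$ appearing because the service and environment pieces are composed with $B_j(\cdot)$, which runs at fluid rate $(\rho_j\wedge 1)\alpha_j$.

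\textbf{Reflection transfer and principal obstacle.} The Lipschitz continuity of the oblique Skorokhod reflection map with matrix $I-P^T$ (Theorem~7.2 of \cite{chen}, applicable since $P$ has spectral radius strictly less than one) then transfers the $o(t^{1/p'})$ rate from $\widehat X$ to $Q$ and identifies the limit as the RBM with drift $\lambda-\alpha\mu(I-P)$, covariance $\Gamma$, and reflection matrix $(I-P)$. The principal obstacle is the circular dependence in the composition step: the approximation $\widetilde S_j\circ B_j\approx W^S_j\circ B_j$ requires a priori control of $B_j=T_j-I_j$, but $B_j$ itself is determined through the reflection of a process that involves $B_j$. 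The standard remedy is a bootstrap: a crude fluid-scale estimate is fed into the Brownian modulus of continuity, the resulting bound on $\widehat X$ is reflected back to a tighter bound on $B_j$, and the argument is iterated until the sharp $o(t^{1/p'})$ rate is reached. Threading the environment process $T_j$ through each iteration is the additional bookkeeping relative to the reliable-server argument of \cite{chen}, but its behaviour is governed by standard alternating-renewal theory and introduces no essentially new probabilistic complications.
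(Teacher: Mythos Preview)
Your plan is sound and would reach the stated conclusion, but it takes a noticeably different route from the paper. The paper's device is to absorb the random environment into the service process: it defines an auxiliary \emph{reliable} network $\widetilde{\mathcal N}$ whose effective service counting process at station $j$ is the composition $\widetilde S_j=S_j\circ C_j$ (with $C_j$ the cumulative ON-time, your $T_j$). Strong approximation of $C$ comes from Bashtova--Shashkin, of $S$ from Cs\"org\H{o}--Horv\'ath--Steinebach, and the composition Lemma~6.21 of Chen--Yao then gives a $p$-strong Brownian approximation of each $\widetilde S_j$ with drift $\alpha_j\mu_j$ and variance $\sigma_j^2\mu_j^3\alpha_j+\mu_j^2D_j$. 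With this in hand, $\widetilde{\mathcal N}$ satisfies hypotheses (7.65), (7.66), (7.68) of Theorem~7.19 in Chen--Yao, and the result is read off from that theorem (via Remark~7.17) as a black box: the netput representation, the Lipschitz transfer through the oblique reflection map, and the bootstrap resolving the circular dependence of $B_j$ on $Q$ are all packaged inside the cited result and never reargued. You instead keep the environment $T_j$ as a separate primitive and rerun the entire Chen--Yao programme from scratch with one extra layer of composition. This buys transparency---one sees explicitly how each summand of $\Gamma$ and each cap $\rho_j\wedge1$, $\gamma_j\wedge\alpha_j\mu_j$ arises---at the cost of reproducing by hand precisely the machinery the paper sidesteps by reduction to the reliable case.
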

\begin{proof}
To begin with introduce a number of auxiliary processes. Let $\chi(t)$, $\overline{\chi}(t)$  indicate that at  time  $t$ server is in ON and OFF respectively,
$$S_j(t) = \max\{k: \sum\limits_{i=1}^k\eta_j^i\le t\},\quad
N_j(t) = \max\{k: \sum\limits_{i = 1}^k(u^{i}_j+v^{i}_j)\le t\},$$ 
$$C_{j}(t)=\sum\limits_{i=1}^{N_{j}(t)} u^i_{j}+\chi(t)\left(t-\sum_{i=1}^{N_{j}(t)} (u^{i}_j+v^{i}_j)\right) + \overline{\chi}(t) u^{N_j(t)+1}_j. $$ 
So, $C_j(t)$ equals the total time before $t$  $j$th server being in the state ON.

Now consider a process
$\widetilde{S}_j(t) = S_j(C_j(t))$ and a new network $\widetilde{\mathcal N}$ which is defined in the same way as initial network $\mathcal N$ with the following difference. Service times $\widetilde{\eta}_j^i$, $i=1,2,\dots$ for $j$th server are defined as
$$
\widetilde{\eta}^i_j = \inf\{t: \widetilde{S}_j(t) = i\} - \inf\{t: \widetilde{S}_j(t) = i-1\},\quad i=1,\dots.
$$
Note that random variables $\widetilde{\eta}^i_j$, $i=1,2,\dots,$ in general are not independent or identically distributed but, as one will see below, the network $\widetilde{\mathcal N}$ is more convenient to handle. 

For the network $\widetilde{\mathcal N}$, we denote the vector of number of customers at time $t$ and the vector of busy times up to time $t$  by $\widetilde{Q}(t)$ and $\widetilde{B}(t)$ respectively. 
Then
$$\widetilde{Q}(t)= A(t)+\sum\limits_{j=1}^{K} { \widetilde{L}_{j}(B_{j}(t))},\,\,\text{where}\,\, \widetilde{L}_{j}(u) = \sum\limits_{i=1}^{\widetilde{S}_{j}(u)} (\phi_{j}^{i}-e_{j}),\quad 1\leq j \leq K. $$

%is a renewal process for any $j=1,\ldots K$ . Indeed, if we define $\{\theta_m^j\}_{m=0}^{\infty}$, such that $\theta_0^j=0$, $\theta_m^j=\{ \inf t: S_j(C_j(t))=m \}$,  $j=1,\ldots K$, we can see that 
%As $u^j_i$ are distributed exponentially, so the sequences $\{\tau_m^j\}_{m=1}^{\infty}= \{\theta_m^j-\theta_{m-1}^{j}\}$ consist of iid random elements, 
%\begin{equation}
%\label{eq:tau}
%\tau_m^j=\sum_{i=0}^{N_{ON}(\eta)} v_{i}+\eta,\qquad
%\hat S^*_j(\theta_{m-1}+t)=m-1,\quad t\in [0, \tau_m^j ]
%\end{equation}

So, due to Theorem 1 in Bashtova, Shashkin \cite{bashtshash}  the vector-valued process $C=(C_1,\ldots,C_K)$ admits $p$-strong approximation by a Wiener process  $W^C$ whose components are independent  with drifts $c_j = \alpha_j$ and variances $D_j$, $j=1,\dots,K$. In particular $C$ satisfies the functional law of the iterated logarithm.   Also, since each $S_j$ is a renewal process ($j=1,\dots ,K$), by Cs\"{o}rg\H{o}, Horv\'{a}th and Steinebach \cite{CHS}  the process $S$ also admits  $p$-strong approximation by a Wiener process $W^S$ with independent components having drifts $\mu_j $ and variances $\sigma_j^2\mu_j^3$. Hence by Lemma 6.21 in Chen, Yao \cite{chen} the process $\tilde{S}$ admits $p$-strong approximation by a Wiener process with independent components having drifts $\mu_j\alpha_j$ 
and variances $\mu_j^2D_j+\sigma_j^2\mu_j^3\alpha_j,$ $j=1,\ldots,K.$ Additionally $A$ admits a $p$-strong approximation by  a $K$-dimensional Wiener process $W_0$ with drift $\lambda$ and covariance matrix $V$, and we can assume $W_0$ to be independent of all approximating Wiener processes above.

The Remark 7.17 of Chen, Yao \cite{chen} is applicable to both Theorems 7.13 and 7.19 there. Thus as by our construction the network $\tilde{N}$ satisfies the conditions (7.65), (7.66) and (7.68) of the latter theorem,  its first statement  implies the desired result.

\end{proof}

%It can be easily seen:
%$X(t)- \tilde X(t)= \sum_{j=1}^k (\hat S_{j}(C_{j}(t)-\hat S_{j}(B_{j}(t)))+ Y(t)(E-P)$, where $Y(t)=(\alpha_{1},\mu_{1}, I_{1}(t), \dots %\alpha_{k},\mu_{k}, I_{k}(t))$

%$X(t)- \tilde X(t)= \sum_{j=1}^k (\hat S_{j}(C_{j}(t)- \mu_{j} (P-E)_{j}C_{j}(t)-\hat S_{j}(B_{j}(t))+\mu_{j} (P-E)_{j}B_{j}(t))+\sum_{j=1}^k \mu_{j} %(P-E)(C_{j}(t)-B_{j}(t)- \alpha_{j}I_{j}(t))= \sum_{j=1}^k [\hat S_{j}(C_{j}(t)-\hat S_{j}(B_{j}(t))]+\sum_{j=1}^k \mu_{j} %(P-E)(I_{j}(C_{j}(t)-\alpha_{j}I_{j}(t))$

%% The Appendices part is started with the command \appendix;
%% appendix sections are then done as normal sections
%% \appendix

%% \section{}
%% \label{}

%% References
%%
%% Following citation commands can be used in the body text:
%% Usage of \cite is as follows:
%%   \cite{key}          ==>>  [#]
%%   \cite[chap. 2]{key} ==>>  [#, chap. 2]
%%   \citet{key}         ==>>  Author [#]

%% References with bibTeX database:

% \bibliographystyle{model1-num-names}

%% New version of the num-names style
%\bibliographystyle{elsarticle-num-names}
%\bibliography{sample}

%% Authors are advised to submit their bibtex database files. They are
%% requested to list a bibtex style file in the manuscript if they do
%% not want to use model1-num-names.bst.

%% References without bibTeX database:

\end{document}